\newcommand{\e}{\varepsilon}
\newcommand{\eand}{\quad\text{and}\quad}
\newcommand{\R}{\mathbb{R}}
\renewcommand{\P}{\mathbb{P}}
\newcommand{\bkr}{\oblong} 
\newcommand{\bigbkr}{\bigbox} 
\theoremstyle{plain}
\newtheorem{thm}{Theorem}
\newtheorem{lem}[thm]{Lemma}
\newtheorem{cor}[thm]{Corollary}
\newtheorem{prop}[thm]{Proposition}
\theoremstyle{definition}
\newtheorem*{exercise*}{Exercise}
\newtheorem{eg}[thm]{Example}
\numberwithin{thm}{section}
\newcommand{\E}{\mathbb{E}}
\newcommand{\nhat}{\hat{n}}
\newcommand{\vs}[1]{\vec{#1}^*}
\newcommand{\vn}{\vec{n}}
\newcommand{\vns}{\vs{n}}
\newcommand{\vc}{\vec{c}}
\newcommand{\vw}{\vec{w}}
\newcommand{\eps}{\e}
\newcommand{\dt}{\mathrm{d}t}
\DeclareMathOperator{\Cyl}{Cyl}
\begin{document}

\title[Some people have all the luck]{Some people have all the luck}

\author{Richard Arratia}
\address{Arratia: Department of Mathematics, University of Southern California, Department of Mathematics, 3620 S. Vermont Ave., KAP 104, Los Angeles, CA 90089-2532}
\email{rarratia \text{at} usc.edu}

\author{Skip Garibaldi}
\address{Garibaldi: Institute for Pure and Applied Mathematics, UCLA, 460 Portola Plaza, Box 957121, Los Angeles, California 90095-7121}
\email{skip \text{at} member.ams.org}

\author{Lawrence Mower}
\address{Mower: Palm Beach Post, 2751 S Dixie Highway, West Palm Beach, FL 33405}

\author{Philip B.~Stark}
\address{Stark: Department of Statistics, \#3860, University of California, Berkeley, CA 94720-3860}


\begin{abstract}
We look at the Florida Lottery records of winners of prizes worth \$600 or more.
Some individuals claimed large numbers of prizes.
Were they lucky, or up to something? 
We distinguish the ``plausibly lucky'' from the ``implausibly lucky'' by solving optimization problems that take into account the particular games each gambler won, where  
plausibility is determined by finding the minimum expenditure so that
if every Florida resident spent that much, the chance that any of them would win as
often as the gambler did would still be less than one in a million.
Dealing with dependent bets relies on the BKR inequality; solving the optimization problem
numerically relies on the log-concavity of the regularized Beta function.
Subsequent investigation by law enforcement confirmed that the gamblers 
we identified as ``implausibly lucky'' were indeed behaving illegally. 
\end{abstract}

\maketitle

\setcounter{tocdepth}{1}
\tableofcontents

It is unusual to win a lottery prize worth \$600 or more.
No one we know has. 
But ten people have each won more than 80 such prizes in the Florida Lottery.  
This seems fishy.  
Someone might get lucky and win the Mega Millions jackpot (a 1-in-259~million chance) 
having bought just one ticket.  
But it's implausible that a gambler would win many unlikely
prizes without having bet very many times.

How many?
We pose an optimization problem whose answer gives a lower bound on any 
sensible estimate of an alleged gambler's spending:
over all possible combinations of Florida Lottery bets, 
what is the minimum amount spent so that, if \emph{every}
Florida resident spent that much, the
chance that \emph{any} of them would win so many times is still less than one in a million?
If that amount is implausibly large compared to that gambler's means, we have
statistical evidence that she is up to something.

Solving this optimization problem in practice hinges on two math facts:
\begin{itemize}
\item an inequality that lets us bound the probability of winning dependent
bets in some situations in which we do not know precisely which bets were made.
\item log-concavity of the regularized Beta function, which lets us show
that any local minimizer attains the global minimal value.
\end{itemize}
We conclude that 2 of the 10 suspicious gamblers could just be lucky.
The other 8 are chiseling or spending implausibly large sums on lottery tickets.  
These results were used by one of us (LM) to focus on-the-ground investigations 
and to support an expos\'e of lax security in the Florida lottery \cite{PBP}.
We describe what those investigations found, and the policy consequences
in Florida and other states.

\section{How long can a gambler gamble?}
Is there a non-negligible probability that a pathological gambler of moderate means 
could win many \$600+ prizes?  
If not, we are done: our suspicion of these 10~gamblers is justified.

So, suppose a gambler starts with a bankroll of $S_0$ and buys 
a single kind of lottery ticket over and over again.  
If he spends his initial bankroll and all his winnings, 
how much would he expect to spend in total and how many prizes would he expect to collect
before going broke?

Let the random variable $X$ denote the value of a ticket, payoff minus cost.  
We assume that 
\begin{equation} \label{ruin.ass1}
\E(X) < 0,
\end{equation}
because that is the situation in the games where our suspicious winners claimed prizes.  
(It does infrequently happen that lottery tickets can have positive expectation, see \cite{GroteMatheson} or \cite{Finding}.) 
 Assumption \eqref{ruin.ass1} and the Law of Large Numbers say that a gambler with a finite
 bankroll eventually
will run out of money, with probability~1.  
The question is: \emph{how fast?}

Write $c > 0$ for the cost of the ticket, so that 
\begin{equation} \label{ruin.ass2}
\P(X \ge -c) = 1 \eand \P(X = -c) \ne 0.
\end{equation}
To illustrate our assumptions and notation, let's look at a concrete example of a Florida game, Play 4.  
It is based on the \emph{numbers} or \emph{policy} game formerly offered by organized crime, described in \cite{Numbers} and \cite{Sellin}. 
Variations on it are offered in most states that have a lottery.

\begin{eg}[Florida's Play 4 game] \label{play4.eg}
Our ten gamblers claimed many prizes in Florida's Play~4 game,
although in 2012 it only accounted for about 6\% of the Florida Lottery's \$4.45 billion in sales.
Here are the rules, simplified in ways that don't change the probabilities.

The Lottery draws a 4-digit random number twice a day.
A gambler can bet on the next drawing by 
paying $c = \$1$ for a ticket, picking a 4-digit number, and choosing ``straight'' or ``box.''  

If the gambler bets ``straight,'' she wins \$5000 if her number matches
the next 4-digit number exactly (which has probability $p = 10^{-4}$).
She wins nothing otherwise.  
The expected value of a straight ticket is $\E(X) = \$5000 \times 10^{-4} - \$1 = -\$0.50$.

If a gambler bets ``box,'' she wins if  her number is a permutation of the digits in
the next 4-digit number the Lottery draws.
She wins nothing otherwise.  
The probability of winning this bet depends on the number of distinguishable
permutations of the digits the gambler selects.

For instance, if the gambler bets on 1112, there are 4 possible permutations,
1112, 1121, 1211, and 2111.
This bet is a ``4-way box.''
It wins \$1198 with probability $1/2500 = 4 \times 10^{-4}$, since 
4 of the 10,000 equally likely outcomes are permutations of those four digits.
If the gambler bets on 1122, there are 6~possible permutations of the digits;
this bet is called a ``6-way box.''
It wins \$800 with probability $6 \times 10^{-4}$.  (The 6-way box is relatively unpopular, accounting for less than 1\% of Play 4 tickets.)
Buying such a ticket has expected value $\E(X) \approx -\$0.52$.
Similarly, there are 12-way and 24-way boxes.
\end{eg}

Returning to the abstract setting, the gambler's bankroll after $t$ bets is 
\[
S_t := S_0 + X_1 + X_2 + \cdots + X_t,
\]
where $X_1, \ldots, X_t$ are i.i.d.\ random variables with the same distribution as 
$X$, and $X_i$ is the net payoff of the $i$-th ticket.  
The gambler can no longer afford to keep buying tickets after the $T$th one, 
where $T$ is the smallest 
$t \ge 0$ for which $S_t < c$.

\begin{prop} \label{ruin.2}
In the notation of the preceding paragraph, 
\[
\frac{S_0 - c}{|\E(X)|} < \E(T) \le \frac{S_0}{|\E(X)|},
\]
with equality on the right if $S_0$ and all possible values of $X$ are integer multiples of $c$.
\end{prop}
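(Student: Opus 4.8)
The plan is to apply Wald's identity to the stopping time $T$. The first thing to check is that $T$ is an almost surely finite stopping time with $\E(T) < \infty$. Finiteness of $\E(T)$ is the one point that needs care; it follows from the negative drift $\E(X) < 0$ together with the (implicit, and true for lottery games) integrability of $X$. One clean way to get it is to apply optional stopping to the martingale $Y_t := S_t + t\,|\E(X)|$: since $S_{t\wedge T} \ge 0$ always (indeed $S_t \ge c$ for $t < T$), we get $|\E(X)|\,\E(t\wedge T) = S_0 - \E(S_{t\wedge T}) \le S_0$ for every $t$, and monotone convergence gives $\E(T) \le S_0/|\E(X)| < \infty$. (Alternatively one can quote the standard fact that a random walk with negative mean increment has finite expected first passage time below a fixed level.) I expect this finiteness verification to be the only real obstacle; the rest is bookkeeping.

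Granting $\E(T) < \infty$, Wald's identity yields $\E(S_T) = S_0 + \E(X)\,\E(T)$, so it suffices to locate $\E(S_T)$ in the interval $[0,c)$. I would argue pointwise: if $S_0 < c$ then $T = 0$ and $S_T = S_0 \in [0,c)$; if $T \ge 1$, then by the definition of $T$ we have $S_{T-1} \ge c$, and since $X_T \ge -c$ by \eqref{ruin.ass2} we get $S_T = S_{T-1} + X_T \ge 0$, while $S_T < c$ because $T$ is a first-passage time. Hence $0 \le S_T < c$ almost surely.

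Finally, $c - S_T > 0$ almost surely and $S_T$ is integrable, so $\E(c - S_T) > 0$, i.e.\ $\E(S_T) < c$ strictly, while $\E(S_T) \ge 0$. Substituting into $\E(S_T) = S_0 + \E(X)\,\E(T)$ and dividing through by $\E(X) < 0$ (which reverses the inequalities) gives exactly $\frac{S_0 - c}{|\E(X)|} < \E(T) \le \frac{S_0}{|\E(X)|}$. For the equality clause: if $S_0$ and every value of $X$ are integer multiples of $c$, then $S_t \in c\,\mathbb{Z}$ for all $t$, and the only element of $c\,\mathbb{Z}$ lying in $[0,c)$ is $0$; thus $S_T = 0$, so $\E(S_T) = 0$ and the right-hand bound is attained with equality.
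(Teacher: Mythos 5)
Your proof is correct and follows essentially the same route as the paper: apply Wald's equation to get $\E(S_T) = S_0 + \E(T)\,\E(X)$ and combine it with the pointwise bound $0 \le S_T < c$. The only difference is that you explicitly verify $\E(T) < \infty$ via optional stopping (a hypothesis the paper's proof leaves implicit when invoking Wald), which is a welcome extra bit of rigor rather than a different method.
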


In most situations, $S_0$ is much larger than $c$, and the two bounds are almost identical.  
In expectation, the gambler spends a total of $c\E(T)$ on tickets, 
including all of his winnings, which amount to $c\E(T) - S_0$.

\begin{proof}
By the definition of $T$ and \eqref{ruin.ass2},
\begin{equation} \label{ruin.eq}
0 \le \E(S_T) < c
\end{equation}
with equality on the left in case $S_0$ and $X$ are integer multiples of $c$.
Now the crux is to relate $\E(T)$ to $\E(S_T)$.  
If $T$ were constant (instead of random),  then $T = \E T$ and we could simply write 
\begin{equation} \label{wald.eq}
\E(S_T) = \E(S_0 + \sum_{i = 1}^{\E T} X_i) = S_0 + \E(T)\, \E(X)
\end{equation}
and combining this with \eqref{ruin.eq} would give the claim.  
The key is that equation \eqref{wald.eq} holds even though $T$ is random --- this is Wald's Equation (see, e.g., \cite[\S5.4]{Durrett:EOSP}).
The essential property is that $T$ is a \emph{stopping time}, i.e., for every $k > 0$, whether or not one places a $k$-th bet is determined just from the outcomes of the first $k - 1$ bets.
\end{proof}

You might recognize that in this discussion that we are considering a version of the 
gambler's ruin problem but with an unfair bet and where the house has infinite money; 
for bounds on gambler's ruin without these hypotheses, see, e.g., \cite{Ethier}.

\subsection*{A ticket with just one prize} 
The proposition lets us address the question from the beginning of this section. 
Suppose a ticket pays $j$ with probability $p$ and nothing otherwise; 
the expected value of the ticket, $\E(X) = pj - c$, is negative; 
and $j$ is an integer multiple of $c$.  
If a gambler starts with a bankroll of $S_0$ and spends it all on tickets, successively using the winnings to buy more tickets, then by Proposition \ref{ruin.2} the gambler should expect to 
buy $\E(T) = S_0/(c - pj)$ tickets, which means winning 
\[
\frac{c\E(T) - S_0}{j} = \frac{pS_0}{c-pj}.
\]
prizes.

\begin{eg} \label{badger.eg}
How many prizes might a compulsive gambler of ``ordinary'' means claim?
Surely some gamblers have lost houses, 
so let us say he starts with a bankroll worth $S_0 = \$$175,000, an 
amount between the median list price and the median sale price of a house in 
Florida~\cite{Zillow}.
If he always buys Play~4 6-way box tickets and recycles his winnings to buy more tickets, the previous paragraph shows that he can expect to win about 
\[
pS_0/(c-pj) = 6 \times 17.5 / 0.52 \approx \text{202 times.}
\]
This is big enough to put him among 
the top handful of winners in the history of the Florida lottery.
\end{eg}

Hence, the number of wins alone does not give evidence that
a gambler cheated.  
We must take into account the particulars of the winning bets.

\section{A toy version of the problem}

From here on, a ``win'' means a win large enough to be recorded;
for Florida, the threshold is \$600.
Suppose for the moment that a gambler only buys one kind of 
lottery ticket, and that each ticket is for a different drawing,
so that wins are independent.
Suppose each ticket has probability
$p$ of winning.

A gambler who buys $n$ tickets spends $cn$ and, on average, wins $np$ times.
This is intuitively obvious, and follows formally by modeling a
lottery bet as a Bernoulli trial with probability $p$ of success:
in $n$ trials we expect $np$ successes.  

We don't know $n$, and the gambler is unlikely to tell us.
But based on the calculation in the preceding paragraph,
we might guess that a gambler who won $W$ times bought roughly $W/p$ tickets.
Indeed, an unbiased estimate for $n$ is
$\nhat := W/p$,
corresponding to the gambler spending $c\nhat$ on tickets.  
Since $p$ is very small, like $10^{-4}$, the number $\nhat$ is 
big---and so is the estimated amount spent, $c\nhat$.  
(Note that this estimate includes any winnings
``reinvested'' in more lottery tickets.)

A gambler confronted with $\nhat$
might quite reasonably object that she is just very lucky, 
and that the true number of tickets she bought, $n$, is much smaller.
Under the assumptions in this section, her tickets are 
i.i.d. (independent, identically distributed)
Bernoulli trials, and the number of wins $W$ has a binomial
distribution with parameters $n$ and $p$, which lets
us check the plausibility of her claim by considering
\begin{equation} \label{prob.bin}
   D(n; w, p) :=\fbox{\parbox{1.4in}{probability of at least $w$ wins with $n$ tickets}} = \sum_{k=w}^n \binom{n}{k} p^k (1-p)^{n-k}.
\end{equation}

Modeling a lottery bet as a Bernoulli trial is precisely correct in the case of games like Play 4.  But for scratcher games, there is a very large pool from which the gambler is sampling without replacement by buying tickets; as the pool is much larger than the values of $n$ that we will consider, the difference between drawing tickets with and without replacement is negligible.

\begin{eg}[Louis Johnson] \label{forward.eg}
Of the 10 people who had won more than 80 prizes each in the Florida Lottery, the second most-frequent prize claimant was Louis Johnson.  He 
claimed $W = 57$ \$5,000 prizes from 
straight Play~4 tickets (as well as many prizes in many other games that we ignore in this example).
We estimate that he bought $\nhat = W/p = 570,000$ tickets at a
cost of \$570,000.

What if he claimed to only have bought $n =$~175,000 tickets?
The probability of winning at least $57$ times with 175,000 tickets is
\[
D(175000; 57, 10^{-4}) \approx 6.3 \times 10^{-14}.
\]
For comparison, by one estimate there are about 400 billion stars in our galaxy \cite{Milky}.  Suppose there were a list of all those stars, and two people independently pick a star at random from that list. The chance they would pick the same star is minuscule, yet it is still 40 times greater than the probability we just calculated.
It is utterly implausible that a gambler wins 57~times by buying
175,000 or fewer tickets. 
\end{eg}

\section{What this has to do with Joe DiMaggio} \label{DiMaggio.sec}

The computation in Example~\ref{forward.eg} does not directly answer
whether Louis Johnson is lucky or up to something shady.
The most glaring problem is that we have calculated the probability 
that a \emph{particular} innocent gambler who buys \$175,000 of Play~4 tickets would win 
so many times.
The news media have publicized 
some lottery coincidences as astronomically unlikely, 
yet these coincidences have turned out to be relatively 
unsurprising given the enormous number of people playing the lottery; see, for example,
\cite[esp.~p.~859]{DiaconisMosteller} or \cite{Stefanski} and the references therein.

Among other things, we need to check whether so many people are playing 
Play~4 so frequently that it's reasonably likely at least one of them would 
win at least 57~times.
If so, Louis Johnson might be that person, just 
like with Mega Millions: no particular ticket has a big chance of winning, 
but if there are enough gamblers, there is a big chance \emph{someone} wins.

We take an approach similar to how baseball probability enthusiasts attempt to answer the question,
\emph{Precisely how amazing was Joe DiMaggio?}  
Joe DiMaggio is famous for having the longest hitting streak in baseball:
he hit in 56 consecutive games in 1941.  
(The  modern player with the second longest hitting streak is Pete Rose, who hit in 44 consecutive games in 1978.)
One way to frame the question is to consider the probability that a randomly 
selected player gets a hit in a game,
and then estimate the probability that there is at least one hitting streak at least 
56~games long in the entire 
history of baseball. 
If a streak of 56 or more games is likely, then the answer to the question is ``not so amazing'';
DiMaggio just happened to be the person who had the unsurprisingly long streak.
If it is very unlikely that there would be such a long streak, then the answer is: DiMaggio was truly amazing.  (The conclusions in DiMaggio's case have been equivocal, see the discussion in \cite[pp.~30--38]{ProbTales}.)

Let's apply this reasoning to Louis Johnson's 57 Play~4 wins (Example~\ref{forward.eg}).
Suppose that $N$ gamblers bought Play~4 tickets during the relevant time period, 
each of whom spent at most \$175,000.
Then an upper bound on the probability that at least one such gambler would win at least 57~times 
is the chance of at least one success in $N$ 
Bernoulli trials, each of which has probability no larger than
$p \approx 6.3 \times 10^{-14}$ of success.  
(Louis Johnson represents a success.)
The trials might not be independent, because different gamblers might bet on the
same numbers for the same game, but the chance that at least one of the $N$ 
gamblers wins at least 57~times is
at most $Np$ by the Bonferroni bound (for any set of events $A_1, \ldots, A_N$, 
$\P(\cup_{i=1}^N A_i) \le \sum_{i=1}^N \P(A_i)$).

What is $N$?  
Suppose it's the current population of Florida, approximately 19~million.
Then the chance at least one person would win at least 57~times is no larger than
$19 \times 10^{6} \times 6.3 \times 10^{-14} = 0.0000012$, just over one in a million.

This estimate is crude because the estimated number of gamblers is very rough and of 
course the estimate is not at all sharp (it gives a lot away in the direction of making the gambler look less suspicious) 
because most people spend nowhere near \$175,000 on the lottery.
We are giving even more away because Louis Johnson won many 
other bets (his total winnings are, of course, dwarfed by the expected cash outlay).
Considering all these factors, one might reasonably
conclude that either Louis Johnson has a source of hidden of money---perhaps 
he is a wealthy heir with a gambling problem---or he is up to something.

\begin{eg}[Louis Johnson 2] \label{reverse.eg}
In Example~\ref{forward.eg} we picked the 
\$175,000 spending level almost out of thin air, based on Florida house prices as in Example \ref{badger.eg}.
Instead of starting with a limit on spending and deducing the probability of a number of wins, 
let's start with a probability,  $\e = 5 \times 10^{-14}$, and infer the minimum spending
required to have at least that probability of so many wins.

If Johnson buys $n$ tickets, then he wins at least 57~times with probability 
$D(n; 57, 10^{-4})$.
We compute $n_0$, the smallest $n$ such that 
\[
   D(n; 57, 10^{-4}) \ge \e,
\]
which gives $n_0 =$~174,000.  
Using the Bonferroni bound again, 
we find that the probability, if \emph{everyone} in Florida spent \$174,000 on straight Play~4
tickets, the chance that \emph{any} of them would win 57 times or more 
is less than one in a million.
\end{eg}

\section{Multiple kinds of tickets} 
Real lottery gamblers tend to wager on a variety of games with different
odds of winning and different payoffs.
Suppose they place $b$ different kinds of bets.  
(It might feel more natural to say ``games,'' but 
a gambler could place several dependent bets on a single Play~4 drawing:
straight, several boxes, etc.)  

Number the bets $1, 2, \ldots, b$.
Bet $i$ costs $c_i$ dollars and has probability $p_i$ of winning.
The gambler won more than the threshold on 
bet $i$ $w_i$ times.
We don't know $n_i$, the number of times the gambler wagered on bet $i$.
If we did know the vector $\vn = (n_1, n_2, \ldots, n_b)$,
then we might be able to calculate the probability:
\begin{equation} \label{Pdef.1}
     P(\vn; \vec{w}, \vec{p}) := \left({\parbox{3in}{probability of winning at least $w_i$ times on bet $i$ with $n_i$ tickets, for all $i$}} \right).
\end{equation}
As in Example~\ref{reverse.eg}, we can find a lower bound on the
amount spent to attain $w_i$ wins on bet $i$, $i = 1, \ldots, b$,
by solving
\begin{equation} \label{premother}
\vc \cdot \vns = \min_{\vn} \vc \cdot \vn\quad \text{s.t.}\quad \ n_i \ge w_i \eand  P(\vn; \vec{w}, \vec{p}) \ge \eps.
\end{equation}
For a typical gambler that we study, this lower bound $\vc \cdot \vns$ will be in the millions of dollars.  Thinking back to the ``Joe DiMaggio'' justification for why \eqref{premother} is a lower bound, it is clear that not every resident of Florida would spend so much on lottery tickets, and our gut feeling is that a more refined justification would produce a larger lower bound for the amount spent.

But how can we find $P(\vn; \vec{w}, \vec{p})$?  
If the different bets were on independent events 
(say, each bet is a different kind of scratcher ticket),
then
\begin{equation} \label{Peq.1}
     P(\vn; \vec{w}, \vec{p}) = \prod_{i=1}^b \left({\parbox{2in}{probability of winning at least $w_i$ times on bet $i$ with $n_i$ tickets}} \right) 
     = \prod_{i=1}^b D(n_i; w_i, p_i). 
\end{equation}
But gamblers can make dependent bets, in which case \eqref{Peq.1} does not hold.  
Fortunately, it is possible to derive an upper bound for the typical case,
as we now show.

\section{No dependent wins is almost as good as independent bets}\label{sect bkr}

For most of the 10~gamblers, we did not observe wins on dependent bets, 
such as a win on a straight ticket and a win on a 4-way box ticket for the same Play~4 drawing.  
We seek to prove Proposition \ref{cond} (below), which says that 
if there were no wins on dependent bets, then
treating the bets as if they were independent gives
an overall upper bound on the probability $P$ in \eqref{Pdef.1}.

Abstractly, we envision a finite number $d$ of independent drawings, such
as a sequence of Play~4 drawings.
For each drawing $j$, $j = 1, \ldots, d$, the gambler may bet any amount
on any of $b$ different bets (such as 1234 straight, 1344 6-way box, etc.), whose outcomes---for drawing $j$---may be 
dependent, but whose outcomes
on different draws are independent.  
We write $p_i$ for the probability that a bet on $i$ wins in any particular drawing;
$p_i$ is the same for all drawings $j$.

For  $i=1, \ldots, b$ and $j = 1, \ldots, d$,  let  $n_{ij} \in \{0,1\}$ 
be the indicator that the gambler wagered on bet $i$ in drawing $j$,  
so that $i$th row sum, $n_i := \sum_j n_{ij}$, is the total number of bets on $i$.   
We call the entire system of bets $B$,  represented by the $b$-by-$d$ 
zero-one  matrix $B=[n_{ij}]$.  

\begin{prop} \label{cond}
Suppose that, for each $i$, a gambler wagers on bet $i$ in $n_i$ 
different drawings, as specified by $B$, above.  
Given the bets $B$, consider the events
\[
    W_i := (\text{gambler wins bet $i$ at least $w_i$ times with bets $B$}),
\]
and the event
\[ 
I:= (\text{in each drawing $j$, the gambler wins at most one bet}).
\] 
Then
\begin{equation} \label{cond.1}
   \P(I \cap W_1 \cap \cdots \cap W_b) \le \prod_{i=1}^b \P(W_i).   
\end{equation}
\end{prop}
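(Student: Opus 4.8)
The plan is to derive \eqref{cond.1} from the BKR (van den Berg--Kesten--Reimer) inequality bounding the probability of the \emph{disjoint occurrence} of events on a product space. First I would fix the correct product structure: since the $d$ drawings are independent, take the sample space to be $\Omega = \prod_{j=1}^{d}\Omega_j$, where $\Omega_j$ is the finite set of outcomes of drawing $j$ (e.g.\ the $10^4$ possible 4-digit numbers for a Play~4 draw) and $\P$ is the corresponding product measure. The whole point of this coordinatization is a measurability observation: for each $i$, whether ``bet $i$ wins in drawing $j$'' is a function of the single coordinate $\omega_j$, so the event $W_i$ — at least $w_i$ of the drawings $j$ with $n_{ij}=1$ produce such a win — is determined by the coordinates in $\{\,j : n_{ij}=1\,\}$. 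The event $I$, by contrast, may genuinely depend on all $d$ coordinates; that is exactly why it cannot simply be dropped.

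The heart of the argument is the purely set-theoretic containment
\[
   I \cap W_1 \cap \cdots \cap W_b \ \subseteq\ W_1 \bkr W_2 \bkr \cdots \bkr W_b ,
\]
where $\bkr$ denotes disjoint occurrence. To prove it, take $\omega \in I \cap W_1 \cap \cdots \cap W_b$ and let $K_i = K_i(\omega)$ be the set of drawings $j$ in which the gambler both wagered on bet $i$ (i.e.\ $n_{ij}=1$) and won it under $\omega$. Because $\omega\in W_i$ we have $|K_i|\ge w_i$, and because ``bet $i$ wins drawing $j$'' depends only on $\omega_j$, any outcome that agrees with $\omega$ on the coordinates in $K_i$ also wins bet $i$ at least $|K_i|\ge w_i$ times; hence the cylinder through $\omega$ determined by $K_i$ lies inside $W_i$, so $K_i$ is a witness set for $W_i$ at $\omega$. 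Finally, $K_1,\dots,K_b$ are pairwise disjoint: if some drawing $j$ belonged to both $K_i$ and $K_{i'}$ with $i\ne i'$, the gambler would win two bets in drawing $j$, contradicting $\omega\in I$. Thus $\omega$ carries pairwise-disjoint witnesses for $W_1,\dots,W_b$, which is precisely membership in $W_1\bkr\cdots\bkr W_b$.

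Granting the containment, monotonicity of $\P$ together with the BKR inequality gives
$\P(I\cap W_1\cap\cdots\cap W_b)\le \P(W_1\bkr\cdots\bkr W_b)\le \prod_{i=1}^{b}\P(W_i)$,
where the multi-event form is obtained by applying the two-event inequality repeatedly and grouping (disjoint occurrence is associative enough for this induction on $b$), and where the product space, being a finite product of finite spaces, can if desired be reduced to the Bernoulli case by refining each $\Omega_j$. The one place that needs care — and the step I expect to be the real obstacle — is invoking the \emph{general}-event version of the inequality (Reimer's theorem), not merely the original van den Berg--Kesten version for increasing events: the events ``bet $i$ wins'' are not monotone in any natural encoding of a Play~4 outcome, so we must use the full BKR inequality. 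Once that is in hand, the combinatorial core is essentially a tautology: ``no two wins in a single drawing'' says exactly that the sets of winning drawings for distinct bets are disjoint.
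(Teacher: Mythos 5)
Your proposal follows essentially the same route as the paper: the same witness sets (the paper's $J_i$, your $K_i$, namely the drawings where bet $i$ was both wagered and won), the same set-theoretic containment $I \cap W_1 \cap \cdots \cap W_b \subseteq W_1 \bkr \cdots \bkr W_b$ with the same disjointness argument from $I$, and the same final appeal to Reimer's theorem. Your remark that the full BKR inequality (not just the van den Berg--Kesten version for increasing events) is needed is correct and matches the paper's citations.

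The one place you undersell the difficulty is the phrase ``disjoint occurrence is associative enough for this induction on $b$.'' The paper shows by explicit example that $\bkr$ is \emph{not} associative: there are events with $(A \bkr B) \bkr C \ne A \bkr (B \bkr C)$, and both can differ from the three-fold $A \bkr B \bkr C$. What the induction actually requires is the containment $\bigbkr_{1\le i\le b} A_i \subseteq (\cdots((A_1 \bkr A_2) \bkr A_3)\cdots) \bkr A_b$, and proving it is the main technical work of the paper's argument: it rests on a composition rule for the cylinder operators, $[[A]_J]_K = [A]_{J\cap K}$, together with how $[\,\cdot\,]_J$ interacts with intersections and unions. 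This containment does hold in the direction you need, so your outline is sound, but as written that step is asserted rather than proved, and it is precisely the step that is not a tautology. Everything else in your write-up matches the paper's proof.
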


In our case, $\P(W_i) = D(n_i; w_i, p_i)$, so we restate \eqref{cond.1} as:
\begin{equation}\label{cond.2}
\P(I \cap W_1 \cap \cdots \cap W_b) \le \prod_{i=1}^b D(n_i; w_i, p_i).
\end{equation}

Proposition~\ref{cond} is intuitively plausible: even though the bets are not independent,
the drawings are, and event 
$I$ guarantees that any single drawing helps at most one of the events 
$\{W_i\}$ to occur.  
We prove Proposition~\ref{cond} as a corollary of an extension of a celebrated 
 result, the BKR inequality,  named for 
van~den~Berg--Kesten--Reimer, conjectured in \cite{BK}, and proved in 
\cite{Reimer} and \cite{BFiebig} (or see \cite{CPS}).   
The remainder of this section provides the details.
The original BKR inequality is stated as Theorem~\ref{bkr thm}.  
We separate the purely set-theoretic aspects of the discussion, 
in Section~\ref{bkr.def} and \ref{sect bkr set},  
from the probabilistic aspects, in Section~\ref{sect bkr prob}.

\subsection{The BKR operation $\bkr$} \label{bkr.def}
 Let $S$ be an arbitrary set,
and write $S^d$ for the Cartesian product of $d$ copies of $S$.   
Since our application is probability, we call an 
element $\omega = (\omega_1, \ldots, \omega_d) \in S^d$ an \emph{outcome}, and we call
any $A \subseteq S^d$  an \emph{event}. 

For a subset $J \subseteq \{ 1, \ldots, d \}$ and an outcome $\omega \in S^d$, the 
\emph{$J$-cylinder of $\omega$}, denoted $\Cyl(J, \omega)$, 
is the collection of $\omega' \in S^d$ such that $\omega'_j = \omega_j$ for all $j \in J$.  
For events $A_1, A_2, \ldots, A_b$, let 
$A_1 \bkr A_2 \bkr \cdots \bkr A_b \subseteq S^d$ be the set of $\omega$ for which there 
exist pairwise disjoint $J_1, J_2, \ldots, J_b \subseteq \{ 1, \cdots d \}$ 
such that $\Cyl(J_i, \omega) \subseteq A_i$ for all $i$.  
The case $b = 2$, where one combines just two events, is the context for the original 
BKR inequality as in \cite[p.~564]{BK}; the operation with $b > 2$ is 
new and is the main study of this section.

Here is another definition of $\bkr$ that might be more transparent.
Given an event $A \subseteq S^d$ and a subset $J \subseteq \{ 1, \ldots, d \}$, define the event
\[
  [A]_J  := \{ \omega \in A \mid  \Cyl(J,\omega)\subseteq A \} = \bigcup\nolimits_{\{\omega \mid  \Cyl(J, \omega) \subseteq A\}} \Cyl(J, \omega).
\]
Informally, $[A]_J$ consists of the outcomes in $A$, such that  by looking only at the coordinates indexed by $J$, one can tell that $A$ must have occurred.  
Evidently, for $A,   B \subseteq S^d$, 
\begin{equation} \label{monotone}
    A \subseteq B \text{ implies } A_J \subseteq B_J \quad \text{and} \quad
     J \subseteq K \text{ implies } A_J \subseteq A_K.  
\end{equation}
The definition of $\bkr$ becomes:
\begin{equation}\label{def bkr many}
 \bigbkr_{1 \le i \le b} A_i   := \bigcup_{\text{pairwise disjoint $J_1,\ldots, J_b \subseteq \{ 1, \ldots, d \}$}} [A_1]_{J_1} \cap [A_2]_{J_2} \cap \cdots \cap [A_r]_{J_b}.
\end{equation}
We read the above definition as ``$\bigbkr_{1 \le i \le b} A_i$ is the event that all $b$ events occur,  with $b$ disjoint sets of reasons to simultaneously certify the $b$ events.''
Informally,  the outcome $\omega$, observed only on  the coordinate indices in $J_i$,  supplies the ``reason''  that we can certify that event $A_i$ occurs.

Our notation  $\bigbkr_{1 \le i \le b} A_i   \equiv    A_1 \bkr A_2 \bkr \cdots \bkr A_b$ is intentionally
analogous to the notations for set intersection,  
$\bigcap_{1 \le i \le b} A_i   \equiv    A_1 \cap A_2 \cap \cdots \cap A_b$, and set union, 
$\bigcup_{1 \le i \le b} A_i   \equiv    A_1 \cup A_2 \cup \cdots \cup A_b$.
The multi-input operator $\bigbkr$ is, like set intersection $\bigcap$ and set union $\bigcup$, fully commutative, i.e., unchanged by any re-ordering of the inputs. 
Unlike intersection and union, $\bkr$ is not associative, as we now show.

\begin{eg} \label{skip example}
Take $S=\{0,1\}$, $d=3$,  and 
\[
A =  (0,*,*) \cup (1,0,*),  \quad B = (0, *, *) \cup (1, 1, *), \quad
C = (*,0,1),
\]
where we write for example $(1, 0, *) =   \{ (1, 0, 0), (1, 0, 1) \}= \Cyl(\{1, 2 \}, (1, 0, s)) $ for $s = 0, 1$ and $(0,*,*)= \{ (0,0,0), (0,0,1), (0,1,0), (0,1,1) \}$.  Note that
$|A|=|B|=6$.
Then  $A \bkr B = (0,*, *)$, $(A \bkr B) \bkr C = \{ (0,0,1) \}$ --- using $J_1 = \{1\}$ and $J_2 = \{2,3\}$ in \eqref{def bkr many} --- but
$B \bkr C = \{ (0,0,1) \}$ and $A \bkr (B \bkr C) = \emptyset$.   Also, $A \bkr B \bkr C = \emptyset$.
\end{eg}

\subsection{The connection between lottery drawings and $\bkr$}  \label{bkr.connect}
Before continuing to discuss the BKR operation $\bkr$ in the abstract,
we consider what it means for lottery drawings.
We take $S = 2^b$ to encode the results of a single draw: an element 
$s \in S$ answers, for each of the $b$ bets, whether that bet wins or not.
The sample space for our probability model is $S^d$; the $j$-th coordinate $\omega_j$ 
reports the results of the $b$ bets on the $j$-th draw.

It is easy to see that, in the notation of Proposition~\ref{cond},
\begin{equation}\label{contained}
 \left(  I \cap W_1 \cap \cdots \cap W_b \right)   \subseteq \bigbkr_1^b W_i.
 \end{equation}
Indeed, given an outcome $\omega \in  I \cap W_1 \cap \cdots \cap W_b$, 
we can take, for $i=1$ to $b$, $J_i := \{j \mid \text{on draw $j$, 
bet $i$ wins \emph{and} $n_{ij}=1$} \}$. 
Since $\omega \in I$, the sets $J_1,\ldots,J_b$ are mutually disjoint;
and since $\omega \in W_i$, $|J_i| \ge n_i$.
Hence,
$\Cyl(J_i,\omega) \subseteq W_i$, and thus $\omega \in [W_i]_{J_i}$, for $i=1$ to $b$. 

\begin{eg}
The left hand side of \eqref{contained} can be a strict subset of the right hand side.
For example, with $b=2$ bets and $d = 2$ draws, suppose 
that $w_1 = w_2 = 1$ and the gambler lays both bets on both draws.  
The outcome where both bets win on both draws is not in the left side of 
\eqref{contained} but is in $W_1 \bkr W_2$.   

To write this example out fully, we think of the binary encoding, $S=\{0,1,2,3\}$
corresponding to $\{00,01,10,11\}$, so that, for example, $0\in S$ 
represents a draw where both bets lose, $1 \in S$ represents the outcome 01 
where the first bet loses and the second bet wins,  $2 \in S$ represents the outcome 10 where the first bet wins and the second bet loses,
and $3 \in S$ represents the outcome 11 where both bets win. 

The event $I$ is the set of $\omega= (\omega_1, \omega_2)$ 
for which no coordinate $\omega_j$ is equal to 3. 
The event $W_1$ is the set of $\omega$ such that at least one of the coordinates is
equal to 1 or 3, and the event $W_2$ is the set of  $\omega$ such that at least one of the coordinates is 
equal to 2 or 3. 
Certainly,
\[
I \cap W_1 \cap W_2 = \{ (1, 2), (2, 1) \},
\]
yet
\[
W_1 \bkr W_2 = \{ (1, 2), (2, 1), (1, 3), (2, 3), (3, 1), (3, 2), (3, 3) \}.
\]
\end{eg}

 \subsection{Set theoretic considerations related to the BKR inequality}\label{sect bkr set}
 It is obvious that,
for events $B_1, \ldots, B_r \subseteq S^d$ and $J \subseteq \{1,\cdots,d\}$, 
\begin{equation} \label{capcup}
   \left[\bigcap\nolimits_{1 \le i \le r} B_i\right]_J = \bigcap_{1 \le i \le r} [B_i]_J \quad \text{and} \quad
 \left[\bigcup\nolimits_{1 \le i \le r} B_i\right]_J \supseteq \bigcup_{1 \le i \le r} [B_i]_J
\end{equation}
For unions, the containment may be strict, as in Example \ref{skip example}, where $A \cup B = S^d$ hence $[A \cup B]_\emptyset = S^d$, whereas $[A]_\emptyset = [B]_\emptyset = \emptyset$.

 \begin{lem}[Composition of cylinder operators] \label{cylcomp}
For $A \subseteq S^d$ and $J,K \subseteq \{1,\cdots,d\}$, 
\[
   [[A]_J]_K = [A]_{J \cap K}.
\]
\end{lem}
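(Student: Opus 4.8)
The plan is to prove the set equality by an elementwise argument that just unwinds the definition of the operator $[\,\cdot\,]_J$ twice. The one fact I would isolate at the outset is that, directly from $[A]_J = \{\omega \in A \mid \Cyl(J,\omega)\subseteq A\}$, membership is governed purely by the cylinder condition: $\omega \in [A]_J$ if and only if $\Cyl(J,\omega)\subseteq A$. The clause ``$\omega\in A$'' in the definition is automatic, because $\omega\in\Cyl(J,\omega)$ always. I would state this once and use it without further comment.

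First I would expand the left-hand side with this characterization applied twice. We have $\omega\in[[A]_J]_K$ exactly when $\Cyl(K,\omega)\subseteq[A]_J$, i.e.\ when every $\omega'\in\Cyl(K,\omega)$ satisfies $\Cyl(J,\omega')\subseteq A$; equivalently,
\[
  \omega\in[[A]_J]_K \iff \bigcup\nolimits_{\omega'\in\Cyl(K,\omega)}\Cyl(J,\omega')\subseteq A .
\]
So everything reduces to identifying the displayed union.

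The crux is then the set identity $\bigcup_{\omega'\in\Cyl(K,\omega)}\Cyl(J,\omega') = \Cyl(J\cap K,\omega)$. The inclusion ``$\subseteq$'' is immediate: if $\eta\in\Cyl(J,\omega')$ for some $\omega'\in\Cyl(K,\omega)$, then for each $j\in J\cap K$ we get $\eta_j=\omega'_j=\omega_j$, so $\eta\in\Cyl(J\cap K,\omega)$. For ``$\supseteq$'', given $\eta$ with $\eta_j=\omega_j$ for all $j\in J\cap K$, I would construct an interpolating outcome $\omega'$ by setting $\omega'_j:=\omega_j$ for $j\in K$ and $\omega'_j:=\eta_j$ for $j\notin K$; then $\omega'\in\Cyl(K,\omega)$ by construction, and checking the coordinates $j\in J\setminus K$ (where $\omega'_j=\eta_j$) and $j\in J\cap K$ (where $\omega'_j=\omega_j=\eta_j$) separately shows $\eta\in\Cyl(J,\omega')$. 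This explicit construction of $\omega'$ is the only step in the whole argument where one actually builds an object rather than merely rewriting a definition, so I expect it to be the ``hard part'' — though it is genuinely routine. The degenerate case $J\cap K=\emptyset$ needs no separate treatment, since $\Cyl(K,\omega)$ is never empty (it contains $\omega$) and $\Cyl(\emptyset,\cdot)=S^d$, so both sides then equal $S^d$.

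Combining the two steps gives $\omega\in[[A]_J]_K$ if and only if $\Cyl(J\cap K,\omega)\subseteq A$, which is precisely $\omega\in[A]_{J\cap K}$, completing the proof. As an alternative route, one could instead argue at the level of sets: $[A]_J$ is the largest union of $J$-cylinders contained in $A$, and a subset of $S^d$ is simultaneously a union of $J$-cylinders and a union of $K$-cylinders exactly when it is a union of $(J\cap K)$-cylinders; but that phrasing requires proving this auxiliary characterization and is no shorter than the direct computation, so I would present the elementwise version.
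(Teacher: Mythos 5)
Your proof is correct and uses essentially the same key idea as the paper: the interpolating outcome $\omega'$ that agrees with $\omega$ on $K$ and with the target point off $K$ is exactly the construction in the paper's proof of the inclusion $\subseteq$. Your packaging via the identity $\bigcup_{\omega'\in\Cyl(K,\omega)}\Cyl(J,\omega')=\Cyl(J\cap K,\omega)$ is a clean way to get both inclusions at once, and in particular it supplies the easier containment $\supseteq$ that the paper omits.
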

\begin{proof}   
Suppose first that $\omega \in [[A]_J]_K$.  
That is, $\Cyl(K, \omega) \subseteq A_J$: if $\omega'' \in S^d$ agrees with $\omega$ on $K$, 
then $\Cyl(J, \omega'') \subseteq A$.  
We must show that $\omega$ is in $A_{J \cap K}$; i.e., if $\omega''$ is in $\Cyl(J \cap K, \omega)$, then $\omega''$ is in $A$.

Given $\omega'' \in \Cyl(J \cap K, \omega)$, 
pick $\omega'$ to agree with $\omega$ on $K$ and $\omega''$ on $S^d \setminus K$.  
Then $\omega'$ agrees with $\omega''$ on 
$(S^d \setminus K) \cup (J \cap K)$, so on $J$, i.e., $\omega'' \in A$, proving $\subseteq$.

We omit the proof of the containment $\supseteq$, which is easier.
\end{proof}

\begin{prop} \label{bkr.incl}
For $A_1, A_2, \ldots, A_b \subseteq S^d$, we have:
\[
\bigbkr_1^b A_i \subseteq (((\cdots ((A_1 \bkr A_2) \bkr A_3) \cdots \bkr A_b.
\]
\end{prop}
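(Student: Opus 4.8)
The plan is to prove the inclusion by induction on $b$, peeling the last event $A_b$ off the left-associated product. Define $C_2 := A_1 \bkr A_2$ and $C_k := C_{k-1} \bkr A_k$ for $k \ge 3$, so that $C_b$ is exactly the right-hand side of the proposition and the goal is $\bigbkr_1^b A_i \subseteq C_b$. The base case $b = 2$ needs nothing: $\bigbkr_1^2 A_i = A_1 \bkr A_2 = C_2$ directly from the definition \eqref{def bkr many} of $\bkr$.

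For the inductive step, assume $\bigbkr_1^{b-1} A_i \subseteq C_{b-1}$, let $\omega \in \bigbkr_1^b A_i$, and fix pairwise disjoint $J_1, \ldots, J_b \subseteq \{1, \ldots, d\}$ with $\omega \in [A_i]_{J_i}$ for all $i$, as in \eqref{def bkr many}. Set $J := J_1 \cup \cdots \cup J_{b-1}$ and $K := J_b$; these are disjoint and $\omega \in [A_b]_K$, so once we know $\omega \in [C_{b-1}]_J$ we get $\omega \in [C_{b-1}]_J \cap [A_b]_K$, hence $\omega \in C_{b-1} \bkr A_b = C_b$, as desired. Thus everything reduces to showing $\omega \in [C_{b-1}]_J$, i.e.\ $\Cyl(J, \omega) \subseteq C_{b-1}$. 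By the inductive hypothesis it is enough to show $\Cyl(J, \omega) \subseteq \bigbkr_1^{b-1} A_i$, and the key point is that the witnesses $J_1, \ldots, J_{b-1}$ all lie inside $J$: for any $\omega' \in \Cyl(J, \omega)$ and each $i \le b-1$, since $J_i \subseteq J$ the outcome $\omega'$ agrees with $\omega$ on $J_i$, so $\Cyl(J_i, \omega') = \Cyl(J_i, \omega) \subseteq A_i$, and hence the same pairwise disjoint family $J_1, \ldots, J_{b-1}$ witnesses $\omega' \in \bigbkr_1^{b-1} A_i$. (More formally, $J_i \subseteq J$ and Lemma~\ref{cylcomp} give $\bigl[\,[A_i]_{J_i}\bigr]_J = [A_i]_{J_i \cap J} = [A_i]_{J_i}$, so $\omega \in [A_i]_{J_i}$ implies $\Cyl(J,\omega) \subseteq [A_i]_{J_i}$; intersecting over $i \le b-1$ and applying \eqref{monotone} together with the inductive hypothesis yields $\Cyl(J,\omega) \subseteq \bigcap_{i \le b-1} [A_i]_{J_i} \subseteq \bigbkr_1^{b-1} A_i \subseteq C_{b-1}$.)

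The single step that carries any content is this observation — that a pairwise disjoint witness family contained in the coordinate block $J$ certifies the $(b-1)$-fold operation already on the whole cylinder $\Cyl(J,\omega)$, not merely at $\omega$. Everything else is bookkeeping with unions of index sets and the two elementary monotonicity statements in \eqref{monotone}, so I expect that to be the only real obstacle. It is precisely the asymmetry responsible for the failure of the reverse inclusion: witnesses for $\bigbkr$ may be spread over arbitrary disjoint index sets, whereas the left-associated iterate of $\bkr$ forces them into a particular nested arrangement, as Example~\ref{skip example} shows.
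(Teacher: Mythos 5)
Your proof is correct and follows essentially the same route as the paper's: induction on $b$ peeling off $A_b$, with the crucial step being that the disjoint witnesses $J_1,\ldots,J_{b-1}$ can be absorbed into $J=\bigcup_{i<b}J_i$ (disjoint from $J_b$) so that, via Lemma~\ref{cylcomp} and \eqref{monotone}, the whole cylinder $\Cyl(J,\omega)$ lands in $\bigbkr_1^{b-1}A_i$. The paper packages this as a chain of set identities (a union over all $K$ with $[A_b]_{K^c}$, then \eqref{capcup}, Lemma~\ref{cylcomp}, and a re-indexing), whereas you chase a single outcome $\omega$ and pick out the relevant term of that union directly; the content is the same.
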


\begin{proof}
By induction, using \eqref{monotone},  it suffices to prove that 
\[
\left( \bigbkr_1^b A_i \right) \subseteq  \left( \bigbkr_1^{b-1} A_i \right) \bkr A_b.
\]
With unions over $K \subseteq \{ 1, \ldots, d \}$ and pairwise disjoint $J_1, J_2, \ldots$, 
\begin{eqnarray}
  \left( \bigbkr_1^{b-1} A_i \right) \bkr A_b & = & \bigcup_K \left( \left[ \bigbkr_{i=1}^{b-1} A_i \right]_{K} \cap [A_b]_{K^c} \right)  \label{line 1}\\
& = &  \bigcup_K \left( \left[ \bigcup_{J_1, \ldots, J_{b-1}} \bigcap_{i=1}^{b-1} [A_i]_{J_i} \right]_{K}  \cap [A_b]_{K^c} \right)  \label{line 2} \\
& \supseteq & \bigcup_K  \left(  \left( \bigcup_{J_1, \ldots, J_{b-1}} \bigcap_{i=1}^{b-1} [[A_i]_{J_i}]_K \right)  \cap [A_b]_{K^c} \right)   \label{line 3} \\
& = & \bigcup_K \left( \left( \bigcup_{J_1, \ldots, J_{b-1}} \bigcap_{i=1}^{b-1} [A_i]_{J_i\cap K} \right)  \cap [A_b]_{K^c} \right)  \label{line 4} \\
& = & \bigcup_{J_1, \ldots, J_b} \bigcap_{i=1}^b [A_i]_{J_i} = \bigbkr_1^b A_i  \label{line 5}
\end{eqnarray}
The justifications are as follows.  Line \eqref{line 1} is the  
definition, where $K^c$ denotes the complement of $K$.
Line \eqref{line 2} follows by using the definition \eqref{def bkr many}.  
The set inclusion in line \eqref{line 3} results from applying both parts of \eqref{capcup}.
Line \eqref{line 4} follows by applying Lemma \ref{cylcomp} on the composition of cylinder operators.
Line \eqref{line 5} is just re-labeling the indices:  the previous line is a union, indexed by pairwise disjoint 
$J_1, \ldots, J_b$, and a set $K$;  for $i=1$ to $b-1$,  $K_i= J_i \cap K$, and for index $b$, 
we take $K_b = K^c$---the  set of possible indices $\alpha = (J_1 \cap K, \ldots, J_{b-1}\cap K,K^c)$ is identical to the set of  $\alpha=(K_1, \ldots, K_b)$, with $i \ne j$ implies $K_i \cap K_j  = \emptyset$---and then we switch notation back, from $K_i$'s to $J_i$'s.
\end{proof}

\subsection{Probability considerations related to the BKR inequality}\label{sect bkr prob}
References to the BKR inequality were given just after Equation \eqref{cond.2}.
\begin{thm}[The original BKR inequality]\label{bkr thm}
Let $S$ be a finite set, and let $\P$ be a probability measure on $S^d$ for which the $d$ coordinates are mutually independent.  
(The coordinates might have different distributions.)  
For any events $A,B \subseteq S^d$,  with the event $A \bkr B$ as defined by \eqref{def bkr many}, 
\[
    \P(A \bkr B) \le \P(A) \P(B).
\]    
\end{thm}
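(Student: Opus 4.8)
The plan is to treat Theorem~\ref{bkr thm} as a known, genuinely deep input: it was conjectured by van~den~Berg--Kesten and proved by Reimer (and by van~den~Berg--Fiebig), and the new content of this section is not this inequality but the multi-input version $\P(\bigbkr_1^b A_i) \le \prod_{i=1}^b \P(A_i)$, which I would obtain from Theorem~\ref{bkr thm} by combining Proposition~\ref{bkr.incl} with an induction on $b$. So the honest answer is ``cite it.'' If instead one wants a self-contained argument, I would split it into a soft reduction step and a hard combinatorial core, the core being Reimer's.

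\emph{Reduction to uniform fair coins.} First I would reduce to the case where $S = \{0,1\}$ and $\P$ is the uniform product measure on a cube $\{0,1\}^N$. For each coordinate $j$, replace its marginal $\mu_j$ by a measure with dyadic-rational weights and realize the latter as the pushforward of the uniform measure on $\{0,1\}^{m_j}$ under some map $\pi_j\colon \{0,1\}^{m_j}\to S$; let $\pi = \prod_j \pi_j\colon \{0,1\}^N \to S^d$ with $N = \sum_j m_j$, and write $\mathbb{Q}$ for the uniform measure on $\{0,1\}^N$, so $\mathbb{Q}(\pi^{-1}(E)) = \P(E)$ for every $E$ (with the dyadic marginals). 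The only fact needed about $\bkr$ is the easy inclusion $\pi^{-1}(A \bkr B) \subseteq \pi^{-1}(A) \bkr \pi^{-1}(B)$: given a witness $J_1$ for $A$ and a disjoint witness $J_2$ for $B$ at $\omega$, the unions of the corresponding blocks of split coordinates are disjoint and witness $\pi^{-1}(A)$ and $\pi^{-1}(B)$ at any preimage of $\omega$ (enlarging a witness to whole blocks only shrinks its cylinder). Combining this inclusion with the binary case of the theorem and with $\mathbb{Q}(\pi^{-1}(A)) = \P(A)$ proves the theorem for dyadic marginals; since only finitely many outcomes and events are involved, both sides are continuous in $(\mu_j)_j$, and letting the dyadic approximations tend to $\mu_j$ finishes the reduction. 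What remains is the counting statement: for $A, B \subseteq \{0,1\}^N$, $\;|A \bkr B| \le 2^{-N}|A|\,|B|$.

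\emph{The combinatorial core.} This I would prove by following Reimer: realize $A \bkr B$ inside the doubled cube $\{0,1\}^N \times \{0,1\}^N$ and build an injection $(A \bkr B) \times \{0,1\}^N \hookrightarrow A \times B$ by a reflection/coordinate-swap along the witness sets (alternatively, run his dimension-reducing compression, with $N=0$ as the trivial base case). I expect this to be the real obstacle, and not a routine calculation: the soft correlation inequalities available here (FKG, Ahlswede--Daykin) point the wrong way, and because $A$ and $B$ need not be increasing there is no monotone-coupling shortcut, so one is essentially forced to reproduce Reimer's argument (or the later polynomial-method proof).

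\emph{A shortcut for the application.} Finally I would note that in Proposition~\ref{cond} the events $W_i$ are \emph{increasing} in the natural coordinatewise order on $S = 2^b$ (``bet $i$ wins on draw $j$'' is an up-set in $\omega_j$, hence in $\omega$), so there one only needs the original van~den~Berg--Kesten inequality for increasing events. That case has a short classical proof: reveal the coordinates one at a time while carrying an independent copy of the configuration, and use monotonicity to replace the coordinates of the $B$-witness by those of the independent copy; independence then gives $\P(A \bkr B) \le \P(A)\P(B)$ by a telescoping bound. If one is content to prove Proposition~\ref{cond} rather than Theorem~\ref{bkr thm} in full generality, that is the route I would take, and it avoids Reimer entirely.
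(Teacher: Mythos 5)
The paper gives no proof of Theorem~\ref{bkr thm} at all: it is quoted as a known result, with the attributions to van den Berg--Kesten, Reimer, and van den Berg--Fiebig supplied just before the statement, which is exactly your primary recommendation to cite it. Your supplementary sketches --- the reduction to the uniform measure on a binary cube, Reimer's counting core, and the observation that the events $W_i$ in Proposition~\ref{cond} are increasing so that the original van den Berg--Kesten case would suffice for the application --- are sound but go beyond anything the paper attempts.
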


\begin{cor} \label{ind.bkr} Under the hypotheses of Theorem \ref{bkr thm}, for $ b = 2, 3, \ldots$ and $A_1, \ldots, A_b \subseteq S^d$,
\begin{equation}\label{b inequality}
      \P(A_1 \bkr A_2 \bkr \cdots \bkr A_b) \le \prod_{i=1}^b \P(A_i).
\end{equation}
\end{cor}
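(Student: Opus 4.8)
The plan is to derive inequality~\eqref{b inequality} from the two-event BKR inequality (Theorem~\ref{bkr thm}) together with the set-theoretic containment of Proposition~\ref{bkr.incl}; essentially all of the substance is already in place, and only a short induction remains. Define the left-associated iterated BKR operation recursively by $L_1 := A_1$ and $L_b := L_{b-1} \bkr A_b$ for $b \ge 2$, so that $L_b$ is the expression $A_1 \bkr A_2 \bkr \cdots \bkr A_b$ bracketed from the left. Proposition~\ref{bkr.incl} says precisely that $\bigbkr_{1 \le i \le b} A_i \subseteq L_b$, so, since $\P$ is monotone, it suffices to show $\P(L_b) \le \prod_{i=1}^b \P(A_i)$.

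I would prove this last inequality by induction on $b$. For $b = 2$ it is exactly Theorem~\ref{bkr thm}. Suppose it holds for $b - 1$, for every choice of $b - 1$ events. Given $A_1, \ldots, A_b$, apply Theorem~\ref{bkr thm} to the pair of events $L_{b-1}$ and $A_b$, and then the inductive hypothesis to $A_1, \ldots, A_{b-1}$:
\[
   \P(L_b) \;=\; \P(L_{b-1} \bkr A_b) \;\le\; \P(L_{b-1})\,\P(A_b) \;\le\; \left(\prod_{i=1}^{b-1} \P(A_i)\right) \P(A_b) \;=\; \prod_{i=1}^b \P(A_i).
\]
Combining this with the containment $\bigbkr_{1 \le i \le b} A_i \subseteq L_b$ from Proposition~\ref{bkr.incl} yields
\[
   \P\!\left(\bigbkr_{1 \le i \le b} A_i\right) \;\le\; \P(L_b) \;\le\; \prod_{i=1}^b \P(A_i),
\]
which is \eqref{b inequality}, completing the induction.

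I expect no genuine obstacle at this point: the corollary is a formal consequence of the two-event case. The only inputs from probability are Theorem~\ref{bkr thm}, applied $b-1$ times along the chain $L_2, L_3, \ldots, L_b$, together with monotonicity of $\P$; the combinatorial heavy lifting — that a system of disjoint ``reason sets'' certifying all $b$ events simultaneously can always be reorganized into the nested two-at-a-time form — is entirely carried by Proposition~\ref{bkr.incl}. (Alternatively, one may avoid $L_b$ altogether and peel $A_b$ off $\bigbkr_1^b A_i$ in a single step, using the containment $\bigbkr_1^b A_i \subseteq (\bigbkr_1^{b-1} A_i) \bkr A_b$ that is already established within the proof of Proposition~\ref{bkr.incl}; this variant is marginally more self-contained but no shorter.)
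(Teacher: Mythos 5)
Your proof is correct and follows essentially the same route as the paper: both invoke Proposition~\ref{bkr.incl} to reduce $\bigbkr_{1\le i\le b} A_i$ to the left-associated iterated $\bkr$, and then apply the two-event BKR inequality (Theorem~\ref{bkr thm}) together with induction on $b$. You have merely written out explicitly the induction that the paper leaves as ``applying the $b=2$ case and induction provides the claim.''
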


\begin{proof}
For $b = 2$, \eqref{b inequality} is the original BKR inequality.  For $b \ge 3$, we apply Proposition \ref{bkr.incl} to see that
\[
\P(A_1 \bkr \cdots \bkr A_b) \le \P((((\cdots ((A_1 \bkr A_2) \bkr A_3) \cdots \bkr A_b).
\]
Applying the $b = 2$ case and induction provides the claim.
\end{proof}

We can now prove Proposition \ref{cond}, which from our new perspective is a simple corollary of the extended BKR inequality, Corollary \ref{ind.bkr}.
\begin{proof}[Proof of Proposition \ref{cond}]
In view of the containment \eqref{contained}, we have:
\[
\P(I \cap W_1 \cap \cdots \cap W_b) \le  \P\left( \bigbkr_1^b W_i \right),
\]
and by Corollary \ref{ind.bkr}
\[
 \P\left( \bigbkr_1^b W_i \right) \le \prod \P(W_i). \qedhere
\]
\end{proof}

\section{The optimization problem we actually solve}

In order to exploit the material in the previous section, 
we \emph{replace} definition \eqref{Pdef.1} of $P$ with
\[
P(\vn; \vec{w}, \vec{p}) := \left(\parbox{3in}{{probability of winning at least $w_i$ times on bet $i$ with $n_i$ tickets, for all $i$,}
and no wins on dependent bets} \right);
\]
from Proposition~\ref{cond}, we know that then
\begin{equation} \label{Peq.2}
   P(\vn; \vec{w}, \vec{p}) \le \prod_{i=1}^b D(n_i; w_i, p_i).
\end{equation}

We will find a lower bound $\vc \cdot \vns$ on the
amount spent by a gambler who did not win dependent bets by solving not \eqref{premother}, but rather
\begin{equation} \label{mother}
   \vc \cdot \vns = \min_{\vn} \vc \cdot \vn\quad \text{s.t.}\quad \ n_i \ge w_i \eand  
    \prod_{i=1}^b D(n_i; w_i, p_i) \ge \eps.
\end{equation}
We furthermore relax the requirement that the numbers of bets, the $n_i$'s, be integers 
and we extend the domain of $D$ to include positive real values of $n_i$ as in \cite[p.~945, 26.5.24]{BarlowProschan}:
\begin{equation} \label{prob}
   D(n; w, p) =  I_p(w, n-w+1), \quad \text{where} \quad
    I_x(a, b) 
    := \frac{\int_0^x t^{a-1} (1-t)^{b-1} \, \dt}{\int_0^1 t^{a-1} (1-t)^{b-1} \, \dt}
\end{equation}
is the 
\emph{regularized Beta function}.  
The function $I_x$, or at least its numerator and denominator, are available in many
scientific computing packages, including Python's SciPy library.
Extending the domain of the optimization problem to non-integral $n_i$ 
can only decrease the lower bound $\vc \cdot \vns$, and it brings two benefits, which we now describe.

In our examples, $\prod D(w_i; w_i, p_i)$ is much less than $\eps$, and consequently $n_i^* > w_i$ for some $i$.  As $D(n; w, p)$ is monotonically increasing in $n$, we have an \emph{equality} $\prod D(n_i^*; w_i, p_i) = \eps$.  This is the first benefit, and it implies by \eqref{Peq.2} an inequality $P(\vns; \vec{w}, \vec{p}) \le \eps$.  Therefore, as in \S\ref{DiMaggio.sec},
if all $N$ people in the gambling population spent at least $\vc \cdot \vns$ on tickets, the probability that one or more of the gamblers would win at least 
$w_i$ times on bet $i$ for all $i$ is at most $N \eps$.  To say it differently: \emph{the solution $\vc \cdot \vns$ to \eqref{mother} is an underestimate of the minimum plausible spending  required to win so many times.}

The second benefit of extending the domain of the optimization problem is to make the problem convex instead of combinatorial.
The convexity allows us to show that any local minimum (as found by the computer) attains the
global minimal value.



\begin{prop} \label{global}
   A local minimizer $\vns$ for the optimization problem \eqref{mother} 
   (relaxed to include non-integer values of $n_i$)
   attains the global minimal value.
\end{prop}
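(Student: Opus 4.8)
The plan is to recognize \eqref{mother} as the minimization of a \emph{linear} functional over a \emph{convex} feasible set, since for such a problem every local minimizer is global. The convex-analysis half is routine and I would dispatch it first: if $\vns$ is a local minimizer of $\vn \mapsto \vc \cdot \vn$ over a convex set $F$ and $\vn \in F$ is arbitrary, then $(1-t)\vns + t\vn \in F$ for all $t \in [0,1]$, its objective value $(1-t)(\vc\cdot\vns) + t(\vc\cdot\vn)$ equals $\vc\cdot\vns$ at $t=0$, so local minimality forces it to be $\ge \vc\cdot\vns$ for all small $t>0$, whence $\vc\cdot\vn \ge \vc\cdot\vns$. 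Thus the whole proposition reduces to convexity of the feasible set
\[
F := \Bigl\{\vn \;:\; n_i \ge w_i \text{ for all } i,\ \textstyle\prod_{i=1}^b D(n_i; w_i, p_i) \ge \eps\Bigr\}.
\]

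The box $\{\vn : n_i \ge w_i \text{ for all } i\}$ is convex, so the work is in the product constraint. On the box each $D(n_i; w_i, p_i) = I_{p_i}(w_i, n_i - w_i + 1)$ is strictly positive (because $p_i > 0$ and $n_i - w_i + 1 \ge 1$), so $\prod_i D(n_i; w_i, p_i) \ge \eps$ is equivalent to $g(\vn) := \sum_{i=1}^b \log D(n_i; w_i, p_i) \ge \log\eps$. A sum of functions each depending on a single coordinate is concave as soon as every summand is, and the superlevel set of a concave function intersected with a convex box is convex. Hence $F$ is convex provided that, for each fixed $w \ge 1$ and $p \in (0,1)$,
\[
n \longmapsto \log D(n; w, p) = \log I_p(w, n-w+1) \text{ is concave on } [w,\infty),
\]
equivalently --- since $n \mapsto n-w+1$ is affine --- provided $b \mapsto \log I_x(a,b)$ is concave for each fixed $x \in (0,1)$ and $a \ge 1$.

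That last statement, the log-concavity of the regularized Beta function in its second parameter, is the real content and the step I expect to be the main obstacle; everything above it is bookkeeping. I would attack it analytically from the integral representation in \eqref{prob}: differentiating twice in $b$ under the integral sign gives
\[
\partial_b^2 \log I_x(a,b) = \operatorname{Var}_{\mu_x}\!\bigl[\log(1-t)\bigr] - \operatorname{Var}_{\mu_1}\!\bigl[\log(1-t)\bigr],
\]
where $\mu_y$ is the probability measure on $[0,y]$ with density proportional to $t^{a-1}(1-t)^{b-1}$; so $\mu_1$ is $\mathrm{Beta}(a,b)$ and $\mu_x$ is $\mu_1$ conditioned on $\{t \le x\}$. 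The claim is therefore that conditioning a $\mathrm{Beta}(a,b)$ variable to lie in $[0,x]$ cannot increase the variance of $\log(1-t)$. Since $a \ge 1$ and $b \ge 1$ on the relevant domain, the density $t^{a-1}(1-t)^{b-1}$ is log-concave, and a short computation shows that in the coordinate $y = \log(1-t)$ this density is still log-concave; conditioning on $\{t \le x\}$ is just truncation of $y$ to a half-line, so the inequality follows from the standard fact that truncating a log-concave density to an interval does not increase its variance. Once this one-variable lemma is in hand, the proposition follows from the convexity reduction above, which is why I would invest the effort in isolating and proving it cleanly.
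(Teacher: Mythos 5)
Your proof is correct, and its skeleton---linear objective over a convex feasible set, taking logarithms to turn the product constraint into a sum of one-variable concave functions, so that everything reduces to concavity of $\beta \mapsto \log I_x(\alpha,\beta)$---is exactly the paper's. Where you genuinely diverge is at that last lemma: the paper simply cites \cite[Cor.~4.6(iii)]{FinnerRoters}, which gives the concavity for all $\alpha,\beta>0$, whereas you prove it yourself by writing $\partial_\beta^2 \log I_x(\alpha,\beta)$ as a difference of variances of $\log(1-t)$ under the truncated and untruncated Beta laws, changing variables to $y=\log(1-t)$, and invoking the fact that truncating a log-concave density to a half-line cannot increase its variance. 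The identity and the change of variables check out: the density of $y$ is proportional to $(1-e^y)^{\alpha-1}e^{\beta y}$ on $(-\infty,0)$, which is log-concave precisely when $\alpha\ge 1$, and $\{t\le x\}$ becomes the half-line $\{y\ge\log(1-x)\}$. So your argument needs $\alpha=w_i\ge 1$---harmless here, since every bet entering \eqref{mother} has at least one recorded win, but strictly narrower than the cited result. The one step you should not wave off as ``standard'' without support is the truncation--variance inequality itself: it is true for log-concave densities (for the left-truncation $V(c)=\operatorname{Var}(Y\mid Y\ge c)$ one computes $V'(c)=h(c)\bigl[V(c)-(m(c)-c)^2\bigr]$, with $h$ the hazard rate and $m$ the conditional mean, and $V(c)\le (m(c)-c)^2$ follows from the classical moment bound $\E[Z^2]\le 2(\E Z)^2$ for nonnegative log-concave $Z$), but it needs either that computation or a reference. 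In exchange, your route gives a self-contained, probabilistically transparent proof of the only nontrivial analytic input, while the paper's citation buys brevity and the full parameter range.
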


\begin{proof}
We shall show that the set of values of $\vn$ over which we optimize, 
the \emph{feasible set}, 
\begin{equation} \label{global.S}
 \left \{ \vn \in \R^b \mid \text{$n_i \ge w_i$ for all $i$} \right \}
      \cap \left \{ \vn \in \R^b \mid \prod\nolimits_i D(n_i; w_i, p_i) \ge \eps \right\},
\end{equation}
is convex.  
As the \emph{objective function} $\vc \cdot \vn$ is linear in $\vn$, the claim follows.

The first set in \eqref{global.S} defines a polytope, which is clearly convex.
Because the intersection of two convex sets is convex, it remains to show that the
second set is also convex.

The logarithm is a monotonic function, so taking the log of both sides of an
inequality preserves the inequality, and we may write the second set in \eqref{global.S} as:
\begin{equation} \label{global.S2}
  \left \{ \vn \in \R^b \mid \sum\nolimits_i  \log D(n_i; w_i, p_i) \ge \log \eps \right\}.
\end{equation}
For $0 \le x \le 1$ and $\alpha, \beta$ positive, the function
\[
\beta \mapsto \log I_x(\alpha, \beta)
\]
is concave by \cite[Cor.~4.6(iii)]{FinnerRoters}.
Hence $\log D(n_i; w_i, p_i)$ is concave for $n_i \ge w_i$.  
A sum of concave functions is concave, so the set \eqref{global.S2} is a 
convex set, proving the claim.
\end{proof}

\begin{eg}[Louis Johnson 3]
If we solve \eqref{mother} for Louis Johnson's wins---including not only
his Pick~4 wins but also many of his prizes from scratcher games---we find a 
minimum amount spent of at least \$2 million for $\eps = 5 \times 10^{-14}$.
\end{eg}

\subsection*{Monotonicity} 
Some of the gamblers we studied for the investigative report claimed prizes 
in more than 50~different lottery games.  
In such cases it is convenient to solve \eqref{mother} for only a subset 
of the games to ease computation by reducing the number of variables.
Since removing restrictions results in minimizing the same function over a set
that strictly includes the original set, the resulting ``relaxed''
optimization problem still gives a lower bound for the 
gambler's minimum amount spent.


\section{The man from Hollywood} \label{hollywood}

Louis Johnson's astounding 252~prizes is beaten by a man from Hollywood, Florida, whom we refer to as ``H." 
During the same time period, H claimed 570~prizes, more than twice as many as Johnson did.  Yet
Mower's news report \cite{PBP} stimulated a law enforcement action against Johnson but not against H.  
Why?

All but one of H's prizes are in Play~4, which is 
really different from scratcher games: if you buy \$100 worth of scratcher 
tickets for a single \$1 game, this amounts to 100 (almost) independent Bernoulli trials,
each of which is like playing a single \$1 scratcher ticket.  
In Play~4, you can bet any multiple of \$1 on a number to win a given drawing; if you win (which happens with probability $p = 10^{-4}$), 
then you win 5000 times your bet.  
If you bet \$100 on a single Play~4 draw, your odds of winning remain $10^{-4}$, but 
your possible jackpot becomes \$500,000, and if you win, the Florida Lottery records 
this in the list of claimed prizes as if it were 100 separate wins.  Clearly, these are wins on dependent bets.

So, to infer how much H had to spend on the lottery for his wins to be unsurprising, 
first we have to estimate how much he bet on each drawing.  
Unfortunately, we cannot deduce this from the list of claimed prizes, because it 
includes the date the prize was claimed but not the specific drawing the ticket was for.  
(Louis Johnson's Play~4 prizes were all claimed on distinct dates, so it is reasonable 
to assume they were bets on different draws.)
The Palm Beach Post paid the Florida Lottery to retrieve a
sample of H's winning tickets from their archives.
We think H's
winning plays were as in Table~\ref{hollywood.table}.
\newcommand{\tspace}{$\hspace{0.35in}$}
\begin{table}[hbt]
\begin{tabular}{ccr}
date&number played&amount wagered \\ \hline
12/6/2011 & 6251 & \$52$\tspace$ \\
?? & ???? & \$1\tspace \\ 
11/11/2012 & 4077 & \$101\tspace \\
12/31/2012&1195 & \$2\tspace\\
2/4/2013 & 1951 & \$212\tspace \\
3/4/2013 & 1951 & \$200\tspace\\
\end{tabular}
\vspace*{0.7em}
\caption{H's Play~4 wins during 2011--2013} \label{hollywood.table}
\end{table}

To find a lower bound on the amount H spent by solving the optimization problem 
\eqref{mother}, we imagine that he played several different 
Play~4 games, distinguished by their bet size.  
For simplicity, let us pretend that a player can bet 
\$1, \$50, \$100, or \$200, and
suppose we observed H winning these
bets 2, 1, 1, and 2 times, respectively.  
Using these as the parameters in \eqref{mother} and the same probability 
cutoff $\e = 5 \times 10^{-14}$ gives a minimum amount spent of just \$96,354.  

But we can find a number tied more closely to H's circumstances.
In 2011--2013, he claimed \$2.84~million in prizes.  
These are subject to income tax.
If his tax rate was about 35\%, he would have taken home about \$1.85 million.  
If he spent that entire sum on Play~4 tickets, what is the probability that 
he would have won so much?  
We can find this by solving the following optimization problem with 
$p = 10^{-4}$, $\vw = (2, 1, 1, 2)$, and $\vc = (1, 50, 100, 200)$:
\[
   \max_{\vn}  \prod_{i=1}^4 D(n_i; w_i, p) \quad \text{s.t.} \quad \ w_i \le n_i  \eand  \vc \cdot \vn \le 1.85 \times 10^6.
\]
The solution is about $0.0016$, or one-in-625:
it is plausible that H was just lucky.  
That's because he made large, dependent bets, while we know from 
the examples above that betting a similar sum on smaller, 
independent bets is less likely to succeed. 

This illustrates a principle of casino gambling from 
\cite[p.~170]{Dubins} or \cite[\#37]{Mosteller}: \emph{bold play is better than cautious play}.  
If you are willing to risk \$100 betting red-black on a game of roulette, and you only care about doubling your money at the end of the evening, you are better off wagering \$100 on one spin and then stopping, rather than placing 100 \$1 bets.

\section{The real world}

How did this paper come to be?  
One of us, Lawrence Mower, is an investigative reporter in Palm Beach, Florida.
His job is to find interesting news stories and spend 4--6 months investigating them.  
He wondered whether something might be going on with the Florida Lottery, 
so he obtained the list of prizes and contacted the other three of us to help
analyze the data.  
Below we describe some of the non-mathematical aspects.

\subsection*{What some people get up to} 
Various schemes can result in someone claiming many prizes.  

Clerks at lottery retailers have been known to scratch the wax on a ticket lightly with a pin, 
revealing just enough of the barcode underneath to be able to scan it, as described in \cite[paragraph 75]{Ombudsman}. 
If they scan it and it's not a winner, they'll sell it to a customer, who may not 
notice the very faint scratches on the card.  
Lottery operators in many states replaced the linear barcode 
with a 2-dimensional barcode to make this scam more difficult,
but it still goes on: a California clerk was arrested for it on 9/25/14.

Sometimes gamblers will ask a clerk to check whether a ticket is a winner.  
If it is, the clerk might say it's a loser, or might say the ticket
is worth less than it really is, then claim the prize 
at the lottery office---and become the recorded winner.
Of course, most clerks are honest, but this scheme is popular; see, for example, \cite[paragraphs 47, 48, 80, 146]{Ombudsman}.

Another angle, \emph{ticket aggregation}, goes as follows.  
A gambler who wins a prize of \$600 or more may be reluctant to claim the prize at the
lottery office.  
The office might be far away; the gambler might be an illegal alien;
or the gambler might owe child support or back taxes, which the lottery is required to
subtract from the winnings.
In such cases, the gambler might sell the winning ticket to a third party, 
an \emph{aggregator}, who claims the prize and is recorded to be the winner.  
The aggregator pays the gambler less than face value, to cover income tax
(paid by the aggregator) and to provide the aggregator a profit.
The market rate in Florida is \$500-\$600 for a \$1000 ticket.

Some criminals have acted as aggregators to launder money.  
They pay the gambler in cash, but the lottery pays them with a check, ``clean'' money
because it is already in the banking system.  
Notorious Boston mobster Whitey Bulger \cite{Bulger} and Spanish politician 
Carlos Fabra \cite{Fabra} are alleged to have used this dodge.

When questioned by Mower, some of our suspects confessed to aggregating tickets, 
which is a crime in Florida (Florida statute~24.101, paragraph~2).

\subsection*{Outcomes in Florida} 
Before Mower's story appeared, he interviewed Florida Lottery Secretary Cynthia O'Connell about these gamblers.  
She answered that they could be lucky:  
``That's what the lottery is all about.  You can buy one ticket and you become a millionaire'' \cite{PBP}.  
Our calculations show that for most of these 10~gamblers, this is an implausible claim.
O'Connell and the Florida Lottery have since announced reforms 
to curb the activities highlighted here \cite{LotteryResponse}.  
They stopped lottery operations at more than 30~stores across the state
and seized the lottery terminals at those stores.


\subsection*{More news stories and outcomes in other states} 
Further stories about ``too frequent'' winners have now appeared in 
California (KCBS Los Angeles 10/30/14, KPIX San Francisco 10/31/14), 
Georgia (Atlanta Fox 5 News 9/12/14, Atlanta Journal-Constitution 9/18/14), 
Indiana (ABC 6 Indianapolis, 2/19/15), 
Iowa (The Gazette, 1/23/15), 
Kentucky (WLKY, 11/20/14), 
Massachusetts (Boston Globe, 7/20/14), 
Michigan (Lansing State Journal, 11/18/14),  
New Jersey (Asbury Park Press, 12/5/14 \& 2/18/15; USA Today, 2/19/15),
and 
Ohio (Dayton Daily News 9/12/14).
In Massachusetts, ticket aggregation is not illegal per se.  
In California, the lottery makes no effort to track frequent winners.  

In Georgia, ticket aggregation is illegal but the law had not been enforced.
The practice was so widespread  that elementary calculations 
(much simpler than those presented in this article) cast suspicion on 
125~people.  
This gap in enforcement, in principle easy to detect, came 
to light as a consequence of the much more challenging investigation in 
Florida described here.  
This led to a change in policy announced by the Georgia Lottery Director, Debbie Alford, on 9/18/14: 
``We believe that most of these cases involved retailers agreeing to cash winning tickets on behalf of their customers --- a violation of law, rules, and regulations.''

\section*{Acknowledgements} 
We are grateful to Don Ylvisaker, Dmitry B.~Karp, and an anonymous referee for helpful comments and insight.
The second author's research was partially supported by NSF grant DMS-1201542.


\bibliographystyle{amsplain}
\bibliography{lottery}

\end{document}